\begin{document}
\title{Tate Homology and Powered Flybys}
\author{Kevin Ruck\thanks{Institute of Mathematics, Augsburg University, Germany\newline \hspace*{1.8em}email: Kevin.Ruck@math.uni-augsburg.de}} \maketitle

\begin{abstract}
In this paper we show that in the planar circular restricted three body problem there are either infinitely many symmetric consecutive collision orbits or at least one periodic symmetric consecutive collision orbit for all energies below the first critical energy value. Using Levi-Civita regularization allows us to distinguish two different kinds of symmetric consecutive collision orbits and prove the above claim for both of them separately, one corresponding to a solar eclipse and the other to a lunar eclipse. By interpreting the orbits as Hamiltonian chords between two different Lagrangian submanifolds we can use a perturbed version of $G$-equivariant Lagrangian Rabinowitz Floer homology to prove the existence of this kind of consecutive collision orbit. To calculate this homology we show that under certain conditions the $G$-equivariant Lagrangian Rabinowitz Floer homology is equal to the Tate homology of $G$.
\end{abstract}

\section{Introduction}

Consecutive collision orbits are orbits which start and end in the centre of gravity of the planet they circle around, i.e they start and end with the spaceship crashing onto the planet. Obviously such an orbit has no real physical interpretation, it can only exist in the regularized setting. But it is known that the orbits depend continuously on the starting conditions, so for every consecutive collision orbit there is an orbit close by that barely misses the primary and flies around it very closely. Afterwards the spacecraft will be ejected from this close orbit and flies away, but eventually it will loop back around and have another almost collision. These type of orbits are still well-defined outside of the regularized setting and describe actual physical orbits used in space missions to perform for example powered flybys. The idea behind the powered flyby is that a spacecraft moving around a massive body can use its fuel more and more efficiently as it moves further and further into the potential well. This is called the Oberth effect and goes back all the way to Hermann Oberth (1923). For the application to the planing of space missions not just the pure existence of such orbits might be interesting to know, but also the orientation of these orbits. This should be especially true when it comes to planing the communication of a spacecraft with the control station on earth. If for example one wants to find an orbit that moves relatively close to the surface of a planet so that the satellite can make better observations using a consecutive collision orbit would be a possible choice, but to be able to effectively send the gathered informations back to the control station the orientation of the consecutive collision orbit should point at least roughly in the direction of the earth and not have the observed planet block the communication. 
Proving the existence of these type of orbits is one of the main results of this paper: First let us fix some notation. The massive body the space ship orbits around shell we call \textit{earth} and the second massive body is call \textit{sun}. Keeping with this picture we call a point on the straight line between the two bodies a \textit{solar eclipse point} and a point lying on the extension of this line to the other side of earth a \textit{lunar eclipse point}. \vspace{0.2cm}\\ 
\textbf{Theorem~A}\\ 
\textit{In the restricted three body problem there are for all energies below the first Lagrange point 
\begin{enumerate}
	\item[$\bullet$] infinitely many symmetric consecutive collision orbits or at least one periodic symmetric consecutive collision orbit all intersecting the symmetry axis in a solar eclipse point and
	\item[$\bullet$] infinitely many symmetric consecutive collision orbits or at least one periodic symmetric consecutive collision orbit all intersecting the symmetry axis in a lunar eclipse point.
\end{enumerate}}
Note that the symmetry we want to consider here is given by the reflection of the space coordinate at the axis between earth and sun combined with the reflection of the momentum coordinate at the axis perpendicular to the first axis. A sketch of the position space picture for two different orbits (one red, the  other blue) would look like:

\begin{center}
\begin{tikzpicture}[framed]
\draw (1,2) --(11,2);
\path[fill] (2, 2) circle[radius=0.2];
\path[fill] (7, 2) circle[radius=0.2];
\draw[red] (7, 2) to[out=120, in=90] (4, 2);
\draw[red] (7, 2) to[out=240, in=-90] (4, 2);
\draw[blue] (7, 2) to[out=60, in=90] (10, 2);
\draw[blue] (7, 2) to[out=300, in=-90] (10, 2);
\draw[red] (4, 2) node {$\bullet$};
\draw[blue] (10, 2) node {$\bullet$};
\draw (7, 3) node {\textit{Earth}};
\draw (2, 3) node {\textit{Sun}};
\draw[arrows=->](3.0, 1.4) -- (3.9, 1.9);
\draw (3.0, 1.2) node {solar eclipse point};
\draw[arrows=->](11, 1.4) -- (10.1, 1.9);
\draw (11, 1.2) node {lunar eclipse point};
\end{tikzpicture}
\end{center}
Note that this is only a rough sketch and the actual orbits might be much more complicated then the ones drawn above. We want to remark that the existence of infinitely many consecutive collision orbits in the three body problem was already discussed in \cite{frauenfelder2019a}. But by using our approach described below we are able to significantly increase the strength of the existence statement, i.e. stating the existence of orbits, which are symmetric and have these specific orientations, is what separates this theorem from what is already known. The proof idea is to interpret this type of orbits as Hamiltonian chords starting in the fibre over the collision point and ending in the fix point set of an accordingly chosen anti-symplectic involution, i.e. one can identify those orbits with chords from one Lagrangian to another Lagrangian. After having shown that between the two Lagrangians in question there exists a Hamiltonian diffeomorphism, one then argues that in this case the chords can be interpret as critical points of the perturbed Rabinowitz action functional, where the perturbation is given by the Hamiltonian that induces the diffeomorphism between the Lagrangians. So in chapter~\ref{leaf} we give a short introduction to leaf-wise intersection points and the perturbed Rabinowitz action functional. In chapter~\ref{3BP} we discuss the planar restricted three body problem with a special emphasis on its Levi-Civita regularization. Building on the work in \cite{albers2012} there is a straight forward way of showing that the planar circular restricted three body problem is also of restricted contact type in Levi-Civita regularization (see e.g. \cite[chapter 4.2]{frauenfelder2018a}), which allows us to use Rabinowitz Floer theory to prove Theorem~A. The benefit of Levi-Civita regularization is that the Lagrangian ensuring the symmetry of our orbits becomes two different Lagrangians after the regularization. This allows us then to distinguish the different directions of the symmetric consecutive collision orbits. In chapter~\ref{ExI} we use the techniques introduced so far to prove that these specific consecutive collisions do always exist. It is important to note that at this point we are not yet able to prove any claims about the number of the orbits except from being bigger than zero. To get to the full strength of the above stated theorem we first need to introduce $G$-equivariant Lagrangian Rabinowitz Floer homology in chapter~\ref{Eq-RFH}. The definition of this homology is very straight forward, only the well-definedness of the grading needs some further arguments. After having established the $G$-equivariant version we can then prove the other main result of this paper:\vspace{0.2cm}\\ 
\textbf{Theorem~B}\\
\textit{Let $G$ be a finite group and a symmetry of the Hamiltonian system $(M,\omega,H)$ with Lagrangian $L$, which acts free. Assume that $L\cap H^{-1}(0)$ is a connected submanifold of dimension at least 1. Further, let the system be displaceable and assume that the absolute value of the Conley Zehnder index for non-constant chords is bounded below by $\text{dim}\left(L\cap H^{-1}(0)\right)$. Then the $G$-equivariant Lagrangian RF-homology is equal to the Tate homology of $G$, i.e.
\begin{align*}
RFH_{*}^{G}(M,H,L)=TH_*(G,\mathbb{Z}_2).
\end{align*}}\\
Note that displaceable in this context means that we can displace the energy hypersurface of the Hamiltonian away from the Lagrangian submanifold via a compactly supported Hamiltonian diffeomorphism.  With this new tool at hand we can proceed to prove Theorem~A in chapter~\ref{ExII}. The strategy is actually the same as in chapter~\ref{ExI}: Calculate the Rabinowitz Floer homology in an easy case and use the invariance property to transfer the result to the homology suitable to discuss symmetric consecutive collision orbits.

\subsection*{Acknowledgements}
The author would like to thank his supervisor Urs Frauenfelder for all the great input and the fruitful discussions, which made this paper possible. The author was supported by the Deutsche Forschungsgemeinschaft through DFG grant FR 2637/2-2.

\section{Leaf-wise Intersection Points}
\label{leaf}
The setting we want to work in is the following: Let $(M,\omega=\D\lambda)$ be the completion of a Liouville domain $(\widetilde{M},\lambda)$ with a Hamiltonian $H$ such that $\Sigma:=\del \widetilde{M}=H^{-1}(0)$, the support of $\D H$ is inside of a compact set $K$ and the Hamiltonian vector field coincides with the Reeb vector field on $\del\widetilde{M}$. Let $L$ be an exact Lagrangian manifold, such that $l$ with $\D l=\lambda\vert_L$ has support in $K$ and let there be an almost complex structure $J_t$, which is $\omega$-compatible and SFT like outside of $K$. The term \textit{SFT like outside of $K$} means that $J_t$ is $\omega$-compatible and fulfils
\begin{align}
J_t(x)R(x)=\del_r\big\vert_x
\end{align}
for all $x\in M\setminus K$, where $R$ is the usual extension of the Reeb vector field to the positive part of the symplectization and $\del_r$ stands for the unit vector field in the $\mathbb{R}$-direction. The name is motivated by the usual assumptions placed on $J_t$ in symplectic field theory (see for example \cite[chapter 1.4]{eliashberg2000introduction}). We call a point $p\in\Sigma$ a leaf-wise intersection point with respect to the Hamiltonian diffeomorphism $\varphi_F$ (or a Hamiltonian $F$) if there is a time $\tau$ such that $\phi_R^\tau(p)$ lies inside of $\varphi_F^{-1}(L)$, where $\phi_R^t$ is the flow of the Reeb vector field. Leaf-wise intersections were first considered by Moser in \cite{moser1978fixed} and can actually be defined in a more general setting. We chose the above set up, because it allows us to interpret the leaf-wise intersection points in terms of critical points of a perturbed Rabinowitz action functional, which was first done in \cite{albers2010leaf}. One important difference between the considerations in those papers and the above definition is that they were interested in intersections of Hamiltonian trajectories with the leaves of the Reeb vector field in both the starting and end point. In our setting we are more interested in Reeb chords starting in the Lagrangian manifold $L$ and ending in another Lagrangian, which is defined by displacing $L$ with a Hamiltonian diffeomorphism. These kind of leaf-wise intersection points were first considered by Will Merry in \cite{merry2014a}. There he also explained how to interpret them in terms of perturbed Lagrangian Rabinowitz Floer homology:
In addition to the above assumptions let $\beta:[0,1]\to\mathbb{R}$ be a smooth function with support in $\left(0,\frac{1}{2}\right)$ and
\begin{align}
\int\limits_0^1\beta(t)\D t=1,
\end{align}
and let $F$ be a smooth time dependent function such that $F(\argument,t)=0$ for all $t\in \left[0,\frac{1}{2}\right]$. Denote by $P(M,L)$ all the chords $x:[0,1]\to M$, which start and end in $L$. Then define the perturbed Rabinowitz action functional
\begin{align*}
\mathcal{A}_F^H: P(M,L)\times\mathbb{R}\to \mathbb{R}
\end{align*}
as
\begin{align}
\mathcal{A}_F^H(x,\tau):=\int\limits_0^1 x^*\lambda+ l(x(0))-l(x(1)) - \tau \int\limits_0^1\beta(t)H(x(t))\D t - \int\limits_0^1 F(x(t),t)\D t.
\end{align}
 A critical point of this functional is now a pair $(x,\tau)$ with
\begin{equation}
\begin{aligned}
\del_t x(t)=\tau\beta(t)X_H(x(t))&+X_F(x(t)),\\
\int\limits_0^1 \beta(t)H(x(t))\D t&=0.
\end{aligned}
\end{equation}
Since $\beta(t)H$ and $F$ have disjoint support the first in $\left(0,\frac{1}{2}\right)$ and the second in $\left(\frac{1}{2},1\right)$, it is not hard to show that for every leaf-wise intersection point w.r.t. $\varphi_F$ there is a corresponding critical point of the above action functional (see \cite[Propostion 2.4]{albers2010leaf} for more details). For this perturbed Rabinowitz action functional one can then define a corresponding perturbed Lagrangian Rabinowitz Floer homology $RFH(M,L,H,F)$, which is a standard procedure, so for a more detailed explanation we again refer to \cite{albers2010leaf}. By choosing a homotopy $F_s$ from $F$ to $0$, one can show as usually (\cite[Section 2.3]{albers2012infinitely}) that this perturbed homology is isomorphic to the Lagrangian Rabinowitz-Floer homology without the perturbation, i.e.
\begin{align}
RFH(M,L,H,F)\cong RFH(M,L,H).
\end{align}
This fact will be one of the most important tools on our quest to prove the existence of symmetric consecutive collision orbits in the circular restricted three body problem.

\section{The Circular Restricted three Body Problem and its Levi-Civita Regularization}
\label{3BP}
The physical setting we want to work in is the following: Given three massive bodies, which are shaped in such a way that their gravitational potential is well approximated by that of a point mass, the only relevant force is gravitation and the whole system can be described using classical mechanics. In this setting one would like to understand how these three bodies move in the superposition of their gravitational fields depending on the starting configuration.  But even though the set up we described does not seem too complicated, it turns out that it is an extremely hard problem to solve analytically. To make the problem a bit more handleable, assume that two of our bodies are much heavier than the third, such that the gravitational influence of the third one on the first two is negligible ($\to$ restricted). We further assume that the two heavy masses move around their shared centre of gravity in a circle ($\to$ circular) and that all the movements stay in one plane ($\to$ planar). This set of assumptions is known as the \textit{planar circular restricted three body problem}. Since the movement of the two big masses is already part of our assumptions, only the behaviour of the small body is left to be determined. Let's call one of the big masses $s$ for sun and the other one $e$ for earth, note that the reduced mass $\mu$ of these two bodies doesn't need to be similar to the one of the actual sun and earth. Then the Hamiltonian for this system is given by
\begin{align}
H=\frac{1}{2}\|p\|^2-\frac{\mu}{\|q-s(t)\|}-\frac{1-\mu}{\|q-e(t)\|},
\end{align}
where as usually $q\in \mathbb{R}^2$ stands for the space coordinates and $p\in\mathbb{R}^2$ stands for the momentum coordinates and the total mass is normalised to $1$. One problem with this Hamiltonian is that it is time dependent, because of the movement of earth and sun. In order to get a time independent Hamiltonian we consider a rotating coordinate system where earth and sun stay at fixed places. Hence the Hamiltonian becomes autonomous, but it also receives additional terms for the centrifugal force and the Coriolis force. So the new Hamiltonian defined on the phase space $T^*\mathbb{C}$ is 
\begin{align}
H=\frac{1}{2}\|p\|^2+p_1q_2-p_2q_1-\frac{\mu}{\|q-s\|}-\frac{1-\mu}{\|q-e\|}-E,
\end{align}
where $e$ and $s$ are now fixed coordinates and $E$ is the energy of our system we want to consider. Even though the Hamiltonian is now autonomous, there is still one important issue we have to deal with namely the collisions. The standard approach is to choose the starting conditions of our small body in such a way that he can reach only one of the other bodies - let's choose the earth $e$ for now - and then use the Moser regularization around this body. We follow this approach, but instead of Moser we will use Levi-Civita regularisation, since this will help us later to distinguish different kinds of orbits. 

The starting point of the Levi-Civita regularization is the simple map
\begin{align*}
l: \mathbb{C}\setminus \{0\} \to \mathbb{C}\setminus \{0\}\ \ ;\ z\mapsto z^2
\end{align*}
and its cotangent lift
\begin{align*}
\mathfrak{L}: T^*(\mathbb{C}\setminus\{0\}) \to T^*(\mathbb{C}\setminus\{0\})\ \ ;\ (z,w)\mapsto \left(l(z), \frac{w}{\overline{l'(z)}}\right).
\end{align*}
Note that we shift our coordinate system, such that $e$ lies now in the origin. To regularize a Hamiltonian system $(T^*\mathbb{C}, H, \omega=\D\lambda)$ we first pull it back using the map $\mathfrak{L}$ and get the new system $(T^*\mathbb{C}, \widehat{H}:=\mathfrak{L}^* H, \widehat{\omega}=\mathfrak{L}^*\omega=\D \mathfrak{L}^*\lambda)$. 
Note that $X_{\mathfrak{L}^*H}=\mathfrak{L}^*X_H$ because
\begin{align*}
\mathfrak{L}^*i_{X_H}\omega &=\mathfrak{L}^*\D H \\
 \Leftrightarrow i_{\mathfrak{L}^*X_H}\widehat{\omega}&=\D\widehat{H}
\end{align*}
So if we have a curve $\gamma$ satisfying $\frac{\D}{\D t}\gamma(t)=X_{\widehat{H}}(\gamma(t))$ we can recover the Hamiltonian dynamics on the original space by applying the map $\mathfrak{L}$:
\begin{align*}
\frac{\D}{\D t}\mathfrak{L}(\gamma(t))=\D \mathfrak{L}\left(\frac{\D}{\D t}\gamma(t)\right)=\D \mathfrak{L} \left(X_{\widehat{H}}(\gamma(t))\right)= X_H(\mathfrak{L}(\gamma(t)))
\end{align*}
The second step is now to multiply the pulled back Hamiltonian with $\|z\|^2$. On the energy hypersurface this changes the Hamiltonian vector field only by a prefactor, since
\begin{align*}
i_{X_{\|z\|^2\widehat{H}}}\omega=\D (\|z\|^2\mathfrak{L}^*H) =\|z\|^2\D \left(\mathfrak{L}^*H\right) + \widehat{H}\D \|z\|^2=\|z\|^2\D \left(\mathfrak{L}^*H\right)
\end{align*}
on $(\mathfrak{L}^*H)^{-1}(0)$. Therefore we can finally conclude that the trajectories of $(H,\omega)$ and $(\|z\|^2\widehat{H},\widehat{\omega})$ lying in their respective energy hypersurfaces coincide up to parametrization. To be able to apply the regularization procedure to our situation, we need to put the earth in the centre of our coordinate system, i.e. shift $q \to q+e$. Note that we can choose our coordinate system in such a way that the two big masses lie on the real axis. The regularised Hamiltonian is now given by
\begin{equation}
\begin{aligned}
&K(z,w)= \|z\|^2\cdot(\mathfrak{L}^*H)(z,w)\\
&=\frac{1}{8}\|w\|^2-E\|z\|^2-(1-\mu)+\frac{\|z\|^2-e_1}{2}(w_1z_2-w_2z_1)-\frac{\mu\|z\|^2}{\|z^2+e-s\|}.
\end{aligned}
\label{3Ham}
\end{equation}
Since we want to use Rabinowitz Floer homology, we first need to show that we are in fact in a setting, in which the Rabinowitz Floer homology is well-defined. Gladly most of this work was already done by \cite{albers2012}. Here it was shown that below the first critical energy values the energy hypersurface around the earth and the sun, respectively, are of restricted contact type. This means we can consider the hypersurface to be the boundary of a Liouville domain and the whole space to be the completion of this Liouville domain. But in the above mentioned paper they use the Moser regularization instead of the Levi-Civita regularization, so we first need to transfer this statement to our case. The Idea is to show that we can pull back every closed fiberwise star-shaped hypersurface of $T^*S^2$ to a star-shaped hypersurface in $\mathbb{C}^2$ (see \cite[chapter 4.2]{frauenfelder2018a}). For this one considers the local diffeomorphism
\begin{align*}
\widehat{\mathfrak{L}}: T^*\left(\mathbb{C}\setminus \{0\}\right) \to T^*S^2\setminus\left(S^2 \cup T^*_NS^2\right),
\end{align*}
which is a composition of the map $\mathfrak{L}$ from above, the switch $\mathtt{sw}$ of fibre coordinates with base point coordinates $(q,p)\mapsto (-p,q)$ and the inverse stereographic projection $P_N^{-1}$ at the north pole. This maps the energy hypersurface $K^{-1}(0)$ in Levi-Civita regularization to the energy hypersurface in Moser regularization. Now we pull back the symplectic two form and the Liouville one form. 
For the symplectic two form we have on $T^*\left(\mathbb{C}\setminus \{0\}\right)$
\begin{align*}
\widehat{\mathfrak{L}}^* \omega_{\text{Moser}}&=(P_N^{-1}\circ\mathtt{sw}\circ \mathfrak{L})^* \omega_{\text{Moser}}\\&=\mathfrak{L}^*\RE(\D q\wedge \overline{\D p})\\ &=\RE\left(2z\D z \wedge\overline{\left(\frac{1}{2\bar{z}}\D w-\frac{w}{2\bar{z}^2}\D \bar{z}\right)}\right)\\ &=\RE(\D z\wedge\overline{\D w})\\ &= \D z_1\wedge\D w_1+\D z_2\wedge\D w_2,
\end{align*}
with the not so standard notation of $z=z_1+iz_2$ and $z_1,z_2\in \mathbb{R}$. For the Liouville one form we have: 
\begin{align*} 
\widehat{\mathfrak{L}}^* \lambda_{\text{Moser}}&=(P_N^{-1}\circ\mathtt{sw}\circ \mathfrak{L})^* \lambda_{\text{Moser}}\\&=\mathfrak{L}^*  \RE(q\overline{\D p})\\ 
&= \RE\left(z^2\overline{\D\left(\frac{w}{2\bar{z}}\right)}\right)\\ 
&= \frac{1}{2}\RE(z\overline{\D w}-\overline{w}\D z)\\ 
&= \frac{1}{2} \left(z_1\D w_1- w_1\D z_1+ z_2\D w_2- w_2\D z_2\right) 
\end{align*}

With this we see that the Liouville vector field implicitly defined by $ i_X\omega = \lambda$ is therefore $\frac{1}{2}\left(z\frac{\del}{\del z}+ w\frac{\del}{\del w}\right)$, i.e. it is a radial vector field. For Moser regularization we already know from \cite{albers2012} that $X_M H_M>0$ for all energies below the first critical energy value, where $X_M$ should stand for the Liouville vector field in Moser regularisation and $H_M$ for the corresponding Hamiltonian. Since we can get both the Liouville one form and the Hamiltonian function in Levi-Civita regularization by pulling them back from Moser regularization we see that
\begin{align}
(\widehat{\mathfrak{L}}^*X_M)(\widehat{\mathfrak{L}}^*H_M)=\widehat{\mathfrak{L}}^*(X_M H_M)>0.
\end{align}
Note that since both hypersurfaces are compact and the involved vector fields and functions are smooth this result extents to all of $K^{-1}(0)$. So we can finally conclude that also for Levi-Civita regularization we can view the energy hypersurface as boundary of a Liouville domain and the whole space as the completion of the Liouville domain. To finally meet all the requirements for a well-defined RF-homology we need to cut our Hamiltonian off outside of the Liouville domain and set it to a constant $c_0$ which is different from the energy we want to consider. Further we need to show that on the energy hypersurface the Hamiltonian vector field coincides with the Reeb vector field. But since we already know that our energy hypersurface is the boundary of the Louville domain, the Reeb vector field is just $R=f\cdot X_K$ for a smooth function $f$, which is never zero. Therefore we can define a new Hamiltonian $\widetilde{H}:= f\cdot\widehat{H}
$, which results in a new Hamiltonian vector field $X_{\widetilde{H}}=f\cdot X_{\widehat{H}}$. From this we can conclude two things: First the trajectories of the new Hamiltonian are just a reparametrization of the old ones. Second the Hamiltonian vector field $X_{\widetilde{H}}$ coincides with the Reeb vector field, because $X_{\widetilde{K}}=f\cdot X_K=R$ and $\widetilde{K}^{-1}(0)= (f\cdot K)^{-1}(0)= K^{-1}(0)$ (remember that $f$ is never zero). So finally we have checked all the necessary requirements to define the Rabinowitz Floer homology. For simplicity we will denote the Hamiltonian after Levi-Civita regularization and with the above adjustment by a suitable function $f$ again by $H$.  

\section{Existence of Symmetric Consecutive Collision Orbits I}
\label{ExI}
Now we want to use what we know so far to prove that in the planar circular restricted three body problem in Levi-Civita regularisation (around the earth) for energies below the first critical energy value there are always at least two symmetric consecutive collision orbits, for one the intersection point of the orbit with its symmetry axis lies on the line between the sun and earth, for the other one  exactly on the opposite side. 
To prove the above claim we will first interpret the symmetric consecutive collision orbits as Reeb chords between two different Lagrangian submanifolds in the following way: On the original space of the planar restricted three body problem we have an anti-symplectic involution
\begin{align}
R: T^*\mathbb{C} \to T^*\mathbb{C}\ \ ;\ q\mapsto \overline{q}\ ,\ p\mapsto -\overline{p},
\end{align}
which leaves the Hamiltonian invariant. After the Levi-Civita regularization this anti-symplectic involution corresponds to now two involutions
\begin{align}
\widehat{R}_1: z\mapsto -\overline{z}\ \ ;\ w\mapsto \overline{w}\\
\widehat{R}_2: z\mapsto \overline{z}\ \ ;\ w\mapsto -\overline{w}
\end{align}
and their corresponding fix point sets are
\begin{align}
L_{S}:=\text{Fix}(\widehat{R}_1)=i\mathbb{R}\times\mathbb{R}\ \ \text{and}\ \ 
L_{M}:=\text{Fix}(\widehat{R}_2)=\mathbb{R}\times i\mathbb{R}.
\end{align}
If we think of the small body as the moon and assume that we chose the coordinates such that the sun is at the left side of the earth, $L_S$ would correspond to the eclipse of the sun (since $l:i\mathbb{R}\to \mathbb{R}^-_0$) and $L_M$ would correspond to the eclipse of the moon (since $l:\mathbb{R}\to \mathbb{R}^+_0$), at least as long as we stay below the first critical energy value. Since the symplectic form is given by $\omega= \D z_i\wedge\D w_i$ these two set are obviously Lagrangian subspaces. The reason why we consider this anti-symplectic involution is the following:
Let $x(t)$ be a solution of $H$ with $x(0)\in L_{\text{col}}$ and $x(1)\in L_{S}$ (or equivalently $x(1)\in L_{M}$). Here $L_{\text{col}}$ is the Lagrangian subspace of collision, i.e. $L_{\text{col}}:=\left\{(q,p)\in\mathbb{R}^4\ \vert\ q=0\right\}$. Then $\widehat{R}_1(x(1-t))$ is also a solution of $H$ since
\begin{equation}
\begin{aligned}
\frac{\D}{\D t}\big\vert_{t_0}\widehat{R}_1(x(1-t))&=\D\widehat{R}_1 \frac{\D}{\D t}\big\vert_{t_0}x(1-t)\\&=-\D\widehat{R}_1 \frac{\D}{\D t}\big\vert_{1-t_0}x(t)\\&=-\D\widehat{R}_1 X_H(x(t))\big\vert_{1-t_0}\\&=X_H\widehat{R}_1(x(t)))\big\vert_{1-t_0}\\&=X_H\widehat{R}_1(x(1-t)))\big\vert_{t_0}.
\end{aligned}
\end{equation}
Further, the requirement $x(1)\in L_{S}$ guaranties that the concatenation $x\circ\widehat{R}_1(x(1-\argument))$ is still a smooth solution of $H$. This means, if we find a chord from $L_{S}$ to $L_{\text{col}}$, we also automatically find a symmetric consecutive collision orbit.

Our goal is now to view the chords from $L_S$ or $L_M$ to $L_{\text{col}}$ as leaf-wise intersection points (see Section~\ref{leaf}). To do this we need Hamiltonians $F_S$ or $F_M$ such that 
\begin{align}
\varphi_{F_S}^{-1}(L_S)= L_{\text{col}}\ \ \text{ and }\ \ \varphi_{F_M}^{-1}(L_M)= L_{\text{col}},
\end{align}
where $\varphi_{F_S}$ or $\varphi_{F_M}$ is the Hamiltonian flow. So define
\begin{align}
F_S(z,w)=\pi(z_2^2+w_2^2)\ \ \text{ and }\ \ F_M(z,w)=\pi(z_1^2+w_1^2).
\end{align}
The Hamiltonian vector field of $F_S$ is then 
\begin{align}
X_{F_S}= \frac{\pi}{2}\cdot w_2\frac{\del}{\del z_2}-\frac{\pi}{2}\cdot z_2\frac{\del}{\del w_2}
\end{align}
and the corresponding flow is given by
\begin{align}
\varphi_{F_S}(z,w,t)=\begin{pmatrix} z_1 \\ \cos\left(\frac{\pi}{2} t\right)z_2+\sin\left(\frac{\pi}{2} t\right)w_2 \\ w_1 \\ \cos\left(\frac{\pi}{2} t\right)w_2-\sin\left(\frac{\pi}{2} t\right)z_2 \end{pmatrix}.
\end{align}
With this it is not hard to see that $\varphi_{F_S}\left(L_S,t=1\right)=L_{\text{col}}$, for $L_M$ the calculations are almost the same. Note that both $F_S$ and $F_M$ are invariant under the $\mathbb{Z}_2$ action $z\mapsto -z$ and $w\mapsto -w$. To be able to interpret our chords as leaf-wise intersection points, we will have to turn $F_S$ or $F_M$ respectively into a compactly supported function. For this choose a compact set $\widetilde{K}$ such that $H^{-1}\subseteq\widetilde{K}$ and a corresponding bump function $\chi_{\widetilde{K}}$, which is $1$ on $\widetilde{K}$ and compactly supported. Define 
\begin{align*}
\widehat{F}_S(z,w)= \chi_{\widetilde{K}}(z^2,w^2)\cdot F_S(z,w)\ \ \text{ and }\ \ \widehat{F}_M(z,w)= \chi_{\widetilde{K}}(z^2,w^2)\cdot F_M(z,w),
\end{align*}
so that we don't loose the invariance with respect to the $\mathbb{Z}_2$ action of our Hamiltonian functions. By again choosing a suitable bump function $\zeta:[0,1]\to \mathbb{R}$ we finally get the desired form of the Hamiltonian functions, i.e.
\begin{align*}
\widehat{F}_S^t(z,w):=\zeta(t)\widehat{F}_S(z,w)\ \ \text{ and }\ \ \widehat{F}_M^t(z,w):=\zeta(t)\widehat{F}_M(z,w)
\end{align*}
with $\widehat{F}_S^t(z,w)=0=\widehat{F}_M^t(z,w)$ for all $t\in\left[0,\frac{1}{2}\right]$ and $\zeta(t)=1$ in a neighbourhood around $1$.

As a next step we want to show that the critical points of $\mathcal{A}^{\widehat{F}_S^t}_H$ are in one to one correspondence with the solutions of the Hamiltonian equation of $H$ with energy $H=0$ that start in $L_{\text{col}}$ and end in $L_S$. So first let $(y,\eta)$ be a critical point of $\mathcal{A}^{\widehat{F}_S^t}_H$ with $y(0),y(1)\in L_{\text{col}}$. Then as discussed above in $[0,\frac{1}{2}]$ we have
\begin{align}
\del_t y= \eta \beta(t)X_H(y(t)).
\end{align}
By Picard-Lindelöf there is also a unique solution of 
\begin{align}
\begin{cases}
\del_t y=\eta X_H(x(t))\\
x(0)=y(0) 
\end{cases}
\end{align}
in $[0,1]$. Then consider $\widetilde{x}(t):=x\left( \int\limits_0^t\beta(T)\D T\right)$ and see that 
\begin{align}
\del_t \widetilde{x}=\eta\beta(t)X_H(\widetilde{x})
\end{align}
in $[0,\frac{1}{2}]$. By uniqueness this means that $y(t)=\widetilde{x}(t)$ and therefore
\begin{align}
x(1)=x\left( \int\limits_0^{\frac{1}{2}}\beta(T)\D T\right)=y\left(\frac{1}{2}\right) \in \varphi^{-1}_{\widehat{F}_S^t}(L_{\text{col}})\cap H^{-1}(0)=L_S\cap H^{-1}(0)
\end{align}
On the other hand let $x\left(\frac{t}{\tau}\right)$ be a solution of the Hamiltonian equation of $H$ with $x(0)\in L_{\text{col}}$, $x(1)\in L_S$. Then take the usual cut off function $\beta(t)$ and define 
\begin{align}
y(t):= \begin{cases}
x\left( \int\limits_0^t\beta(T)\D T\right)& t\in[0,\frac{1}{2}]\\
\varphi^{t}_{\widehat{F}_S^t}(x(1)) &  t\in[\frac{1}{2},1]
\end{cases},
\end{align}
which is clearly a critical point of $\mathcal{A}^{\widehat{F}_S^t}_H$. This means for every leaf-wise intersection point there exists a corresponding symmetric consecutive collision orbit, so we now only have to show the existence of a leaf-wise intersection point. 

The next step is to calculate the homology $RFH(M,L_{\text{col}},H,\widehat{F}^t_S)$. First we know that 
\begin{align}
RFH(M,L_{\text{col}},H,\widehat{F}^t_S)\cong RFH(M,L_{\text{col}},H).
\end{align}
In $\mathbb{C}^4$ we can always displace a subspace from a compact set and since by assumption our energy hypersurface is compact, \cite[chapter 2.4]{merry2014a} tells us that the homology $RFH(M,L_{\text{col}},H)$ is zero and so we can conclude:
\begin{align}
RFH(M,L_{\text{col}},H,\widehat{F}_S)=0
\end{align}
Both $L_S\cap L_{\text{col}}$ and $L_M\cap L_{\text{col}}$ are one dimensional subspaces and since $H^{-1}(0)$ is a star-shaped hypersuface, $H^{-1}(0)$ has to intersect with $L_S\cap L_{\text{col}}$ and $L_M\cap L_{\text{col}}$ in two points each. These points are automatically fix points of $\widehat{F}_S$ and $\widehat{F}_M$, respectively. If we set for those points the Lagrange multiplier $\tau$ to zero, we get critical points of the perturbed Rabinowitz action functional. But since we know that the homology is zero and that two constant points can't cancel each other out in the homology, there needs to be at least one nonconstant critical point. This then finally tells us that there exists at least one symmetric consecutive collision orbit corresponding to $L_S$ and one to $L_M$ for all energies below the first critical energy value.

Even though knowing of the existence of at least one symmetric consecutive collision orbit is already quite useful, we can still improve it by using an equivariant version of the Lagrangian Rabinowitz Floer homology. The improvement we want to archive in the next sections is the existence of not just one symmetric consecutive collision orbit but of infinitely many or at least of one periodic symmetric consecutive collision orbit. 

\section{Equivariant Lagrangian Rabinowitz Floer Homology}
\label{Eq-RFH}
In this section we want to understand what equivariant Rabinowitz Floer-homology is and when we can use it to gain better inside of the Hamiltonian system we are interested in.

Let $(M,\omega=\D\lambda)$ be the completion of a Liouville domain $(\widetilde{M},\lambda)$ with a Hamiltonian $H$ such that $\Sigma:=\del \widetilde{M}=H^{-1}(0)$, the support of $\D H$ is inside of a compact set $K$ and the Hamiltonian vector field coincides with the Reeb vector field on $\del\widetilde{M}$. Let $L$ be an exact Lagrangian submanifold, such that $l$ with $\D l=\lambda\vert_L$ has support in $K$ and let there be an almost complex structure $J_t$, which is $\omega$-compatible and SFT like outside of $K$. To be able to use a $\mathbb{Z}$ grading further assume that the morphism induced by the Maslov index of disks with Lagrangian boundary
\begin{align}
\mu: \pi_2(M,L)\to \mathbb{Z}
\end{align}
is trivial. Now let $G$ be a finite Lie group that acts free and proper on the symplectization $\Sigma\times \mathbb{R}$, is a symmetry of our Hamiltonian system, leaves $L$ invariant and $J_t$ is equivariant w.r.t $G$ in the following sense:
\begin{align}
\D\phi_g J_t(x)=J_t(g\acts x)\D\phi_g\ \ \ \text{ for all }g\in G
\end{align}
Note that $\phi_g$ is defined by $\phi_g(x)=g\acts x$. Then we define the action of $G$ on $P_c(M,L)\times\mathbb{R}$ by acting point-wise on the first component and trivial on the second component. Note that $P_c(M,L)$ stands for the chords in $M$, which are relative contractible with respect to $L$, and that a well-defined $\mathbb{Z}$ grading can only be achieved for this subset of chords. The $G$-equivariant Lagrangian RF chain complex $CRF_*^G(M,L)_a^b$ is then constructed by taking (in the Morse-Bott situation) a $G$-invariant Morse function on a compact set of critical points $crit(\mathcal{A}_H)_a^b$, form the usual $\mathbb{Z}_2$ vector space with them and then take the quotient with respect to the $G$ action. As usually the restriction to critical points $x$ with $a\le \mathcal{A}(x)\le b$ is necessary for compactness reasons. To still have a well-defined grading by the CZ-index on the quotient space, we need to show the following proposition:
\begin{proposition}
Let $x$ and $y$ be two Hamiltonian chords in the above described setting with $g\acts x=y$ for a $g\in G$. Then $x$ and $y$ have the same CZ-index.
\end{proposition}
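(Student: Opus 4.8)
The plan is to exploit that each $\phi_g$ is a symplectomorphism which commutes with the Hamiltonian flow and preserves $L$, so that the entire linear datum computing the Conley--Zehnder index of $y$ is obtained from that of $x$ by one fixed symplectic conjugation, under which the index is invariant. First I would recall how the index is assembled in this Lagrangian setting. For a Hamiltonian chord $x$ one picks a symplectic trivialization $\Phi^x_t\colon T_{x(t)}M\to\mathbb{R}^{2n}$ of $x^*TM$, coming from a capping half-disk; by the standing assumption that $\mu\colon\pi_2(M,L)\to\mathbb{Z}$ is trivial the resulting integer is independent of this choice. Writing $\psi^t_H$ for the flow generated by the chord equation (note that $G$ acts trivially on the $\mathbb{R}$-factor, so the Lagrange multiplier $\tau$, and hence the time-dependence of this flow, is the same for $x$ and $y$), the CZ-index is read off from the path of symplectic matrices $t\mapsto\Phi^x_t\circ\D\psi^t_H\big\vert_{x(0)}\circ(\Phi^x_0)^{-1}$ together with the two boundary Lagrangians $\Phi^x_0(T_{x(0)}L)$ and $\Phi^x_1(T_{x(1)}L)$.

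Next I would use that $G$ is a symmetry: $\phi_g^*\omega=\omega$ and $H\circ\phi_g=H$ force $(\phi_g)_*X_H=X_H$, hence $\phi_g\circ\psi^t_H=\psi^t_H\circ\phi_g$ and, upon linearizing, $\D\phi_g\circ\D\psi^t_H=\D\psi^t_H\circ\D\phi_g$; moreover $\phi_g(L)=L$. I then transport the trivialization by setting $\Phi^y_t:=\Phi^x_t\circ(\D\phi_g)^{-1}\colon T_{y(t)}M\to\mathbb{R}^{2n}$, which is again symplectic because $\D\phi_g$ is. A direct substitution using the commutation relation shows that, in the trivialization $\Phi^y$, the linearized-flow path for $y=\phi_g\circ x$ equals $t\mapsto\Phi^x_t\circ\D\psi^t_H\big\vert_{x(0)}\circ(\Phi^x_0)^{-1}$, i.e. literally the path for $x$, while the boundary data become $\Phi^x_0(T_{x(0)}L)$ and $\Phi^x_1(T_{x(1)}L)$ once more, using $\phi_g(L)=L$. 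The two index problems are then identical term by term and necessarily produce the same integer.

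The one delicate point, which I expect to be the main obstacle, is that the transported trivialization $\Phi^y$ need not arise from the capping half-disk one would choose a priori for $y$: a symplectomorphism can in principle shift the capping datum between homotopy classes. This is exactly what the hypothesis $\mu\equiv 0$ neutralizes, since it renders the index independent of the capping, so $\Phi^y$ and any preferred trivialization for $y$ yield the same value. As a consistency check one may instead adopt the analytic definition of the grading as the Fredholm index of the linearized Cauchy--Riemann operator; then the equivariance $\D\phi_g\, J_t(x)=J_t(g\acts x)\,\D\phi_g$ identifies the operator along $y$ with the one along $x$, so their indices coincide. I would present the trivialization computation as the main line and invoke the triviality of $\mu$ to close the capping gap.
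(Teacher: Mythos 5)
Your argument is correct, but it takes a genuinely different route from the paper. The paper never unwinds the definition of the Conley--Zehnder index: it only uses the relative index formula $\mu_{\text{CZ}}(x)-\mu_{\text{CZ}}(x')=\operatorname{ind}(\mathrm{D}s(u))$ together with the equivariance $\D\phi_g\, s(u)=s(g\acts u)$ of the Floer section to conclude that the difference $h(g):=\mu_{\text{CZ}}(g\acts x)-\mu_{\text{CZ}}(x)$ is independent of the chord $x$; it then checks that $h\colon G\to\mathbb{Z}$ is a group homomorphism and invokes the fact that any homomorphism from a finite group to $\mathbb{Z}$ is trivial. You instead compute the index of $y=g\acts x$ directly, by transporting the trivialization with $\D\phi_g$ and observing that the path of symplectic matrices and the two boundary Lagrangians for $y$ coincide term by term with those for $x$; the capping ambiguity you flag is real and is correctly neutralized by the standing hypothesis $\mu\equiv 0$ on $\pi_2(M,L)$ (alternatively, $\phi_g$ composed with a capping half-disk for $x$ is a capping half-disk for $y$ over which your transported trivialization extends, so no appeal to $\mu\equiv 0$ is even needed there). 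The trade-off: the paper's route is softer, needing only the equivariance of the Fredholm problem and no explicit model of the index, but it leans essentially on the finiteness of $G$ in the last step; your conjugation argument is more hands-on but proves the stronger statement that the index is preserved for \emph{any} symmetry group acting as described, finite or not. Your closing "consistency check" via the Fredholm index of a single linearized operator is the one vague spot -- the absolute grading is not the index of one Cauchy--Riemann operator unless you fix a capping operator -- but since it is offered only as a cross-check and your main line is complete, this does not affect the proof.
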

\begin{proof}
We know that the following connection between the CZ-index and the Fredholm index 
\begin{align*}
\mu_{\text{CZ}}(x)-\mu_{\text{CZ}}(x^\prime)=\text{ind}(\text{D}s(u))
\end{align*}
holds, where $u\in\mathcal{H}_{x,x^\prime}$ is a path in the chord space with $x$ and $x^\prime$ as asymptotics and $s$ is as usually the section given by the Floer equation, i.e.
\begin{align*}
s(u)= \del_s u+J_t(u)(\del_t u-\tau X_H(u)).
\end{align*}
Note that if $u$ is not a solution of the Floer equation we first need to choose a connection on the $L^2$ vector bundle over $\mathcal{H}_{x,x^\prime}$ to be able to define the vertical differential. Since our assumptions on the action of $G$ are chosen in such a way that 
\begin{align*}
\D\phi_g s(u)=s(g\acts u)\ \ \ \text{ for all }g\in G.
\end{align*}
$\D\phi_g$ induces then an isomorphism between the kernels of $\text{D}s(u)$ and $\text{D}s(g\acts u)$ and also between their cokernels, hence the Fredholm index of those two is the same. With this we get
\begin{align*}
\mu_{\text{CZ}}(x)-\mu_{\text{CZ}}(x^\prime)=\text{ind}(\text{D}s(u))=\text{ind}(\text{D}s(g\acts u))=\mu_{\text{CZ}}(g\acts x)-\mu_{\text{CZ}}(g\acts x^\prime),
\end{align*}
which is equivalent to
\begin{align*}
\mu_{\text{CZ}}(g\acts x)-\mu_{\text{CZ}}(x)=\mu_{\text{CZ}}(g\acts x^\prime)-\mu_{\text{CZ}}(x^\prime).
\end{align*}
Then define the map
\begin{align*}
h:G\to\mathbb{Z};\ \ \ g\mapsto \mu_{\text{CZ}}(g\acts x)-\mu_{\text{CZ}}(x),
\end{align*}
note that the above calculation shows that $h$ is independent of $x$. Now we show that $h$ is a morphism between $G$ and $\mathbb{Z}$:
\begin{align*}
h(g_1\cdot g_2)&=\mu_{\text{CZ}}(g_1\cdot g_2\acts x)-\mu_{\text{CZ}}(x)\\
&=\mu_{\text{CZ}}(g_1\acts( g_2\acts x))-\mu_{\text{CZ}}(g_2\acts x)+\mu_{\text{CZ}}(g_2\acts x)-\mu_{\text{CZ}}(x)\\
&=h(g_1)+h(g_2)
\end{align*}
\underline{Claim:}
A morphism between a finite group and $\mathbb{Z}$ is trivial.\\
\underline{Proof of the Claim:}
Assume there is a $g\in G$ with $h(g)\ne 0$. Define $N:=\max\limits_{x\in G}\lvert h(x)\rvert$, then there is a $M\in\mathbb{Z}$ such that $M\cdot h(g)>N$. But this means that
\begin{align*}
N<M\cdot h(g)<\lvert h(g^{\lvert M\rvert})\rvert\le N,
\end{align*}
 which is a clear contradiction. This proves the Proposition.
\end{proof}
The differential of this complex is defined by counting $G$-equivalence classes of unparametrized gradient flow lines:
\begin{equation}
\begin{aligned}
\del&:CRF_*^G(M,L)_a^b\to CRF_*^G(M,L)_a^b\\
\del [x]&:= \sum\limits_{\substack{[y]\text{ with}\\\mu_{\text{CZ}}([y])=\mu_{\text{CZ}}([x])+1}}\#_2\left\{[u]\ \bigg\vert\ \widetilde{x}\overset{u}{\longrightarrow}\widetilde{y} \text{ gradient flow, }\widetilde{x}\in[x]\text{ and }\widetilde{y}\in[y]\right\}
\label{Gdiff}
\end{aligned}
\end{equation}
The group action on the moduli space is of course again defined point-wise. The final question we have to answer about the $G$-equivariant RF-homology is, if the differential is still a well-defined differential.
\begin{proposition}
Under the current assumptions the map defined in equation~(\ref{Gdiff}) is well-defined and fulfills $\del^2=0$.
\end{proposition}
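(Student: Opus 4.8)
The plan is to deduce both assertions from the corresponding facts in the standard (non-equivariant) Lagrangian Rabinowitz Floer theory, by observing that the construction in (\ref{Gdiff}) is nothing but the passage to $G$-coinvariants of a $G$-equivariant chain complex. First I would record that, because $J_t$ is $G$-equivariant and $H$ is $G$-symmetric with $L$ invariant, the section $s$ satisfies $\D\phi_g\, s(u)=s(g\acts u)$ for all $g\in G$ (exactly the identity already used in the previous proposition). Consequently, if $u$ is a (cascade) gradient flow line from $\widetilde{x}$ to $\widetilde{y}$, then $g\acts u$ is a gradient flow line from $g\acts\widetilde{x}$ to $g\acts\widetilde{y}$, so $g$ restricts to a bijection $\mathcal{M}(\widetilde{x},\widetilde{y})\to\mathcal{M}(g\acts\widetilde{x},g\acts\widetilde{y})$ of the relevant moduli spaces. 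This exhibits $CRF_*(M,L)_a^b$ with its unperturbed differential $\partial$ as a chain complex of $\mathbb{Z}_2[G]$-modules, and identifies the complex of equation~(\ref{Gdiff}) with the $G$-coinvariants $CRF_*(M,L)_a^b\otimes_{\mathbb{Z}_2[G]}\mathbb{Z}_2$.

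Next I would verify the two freeness statements that make the quotient well-behaved. Since $G$ acts freely on $\Sigma\times\mathbb{R}$, it acts freely on the critical points, so each orbit $[x]$ consists of $\lvert G\rvert$ distinct generators of the same CZ-index (by the previous proposition). Moreover $G$ acts freely on unparametrized flow lines: a relation $g\acts u=u$ up to $\mathbb{R}$-reparametrization would, on passing to the asymptotics as $s\to+\infty$, force $g\acts y=y$ and hence $g=e$. With freeness in hand, the orbit-counting identity $\#_2\{[u]\}=\sum_{k\in G}\#_2\,\mathcal{M}(x,k\acts y)$ holds for any fixed representatives $x\in[x]$ and $y\in[y]$. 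Finiteness of this number follows from the compactness and regularity already available non-equivariantly (the critical set in the window $[a,b]$ is compact, the $0$-dimensional moduli spaces are finite, and $G$ is finite), and independence of the chosen representatives follows from the bijections $g\colon\mathcal{M}(x,y)\to\mathcal{M}(g\acts x,g\acts y)$. This establishes that the map in (\ref{Gdiff}) is well-defined and coincides with the differential $\partial^G$ induced on coinvariants by $\partial$.

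Then $\partial^2=0$ becomes formal. The quotient projection $\pi\colon CRF_*(M,L)_a^b\to CRF_*^G(M,L)_a^b$ is a surjective chain map with $\pi\circ\partial=\partial^G\circ\pi$, and $\partial^2=0$ holds upstairs by the standard theory. Hence $\partial^G\circ\partial^G\circ\pi=\pi\circ\partial\circ\partial=0$, and surjectivity of $\pi$ yields $(\partial^G)^2=0$. Equivalently one may argue geometrically: for $\mu([y])=\mu([x])+2$ the moduli space of $G$-equivalence classes of flow lines is the free $G$-quotient of the upstairs $1$-dimensional moduli space, hence again a compact $1$-manifold whose boundary consists of once-broken configurations occurring in pairs.

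I expect the only genuine technical point to be equivariant transversality, i.e. arranging the generic perturbations needed for regularity of the cascade moduli spaces to be chosen $G$-invariantly. Because the action is free this causes no real difficulty: one selects the auxiliary Morse function on the critical manifold and the perturbation data generically on the quotient and lifts them to $G$-invariant data upstairs, so that regularity upstairs and downstairs are equivalent. This freeness is precisely the standing hypothesis that powers the whole argument, and it is what allows the equivariant differential to be treated as the descent of the non-equivariant one to $G$-coinvariants.
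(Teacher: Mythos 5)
Your proof is correct, and it reaches the conclusion by a route that is organized differently from the paper's. The paper works geometrically throughout: it shows $G$ acts freely (and, being finite, properly) on the union $\bigcup_{\widetilde{x}\in[x],\widetilde{y}\in[y]}\mathcal{M}(\widetilde{x},\widetilde{y})$, concludes that the quotient $\mathcal{M}_G([x],[y])$ is again a manifold of the expected dimension $\mu([x])-\mu([y])-1$ (since $\dim G=0$), and then re-runs the usual compactness-and-gluing argument on these quotient moduli spaces. Your primary argument instead identifies $CRF_*^G$ with the $G$-coinvariants of the upstairs complex, proves the orbit-counting identity $\#_2\{[u]\}=\sum_{k\in G}\#_2\,\mathcal{M}(x,k\acts y)$ (which is exactly where the freeness of the action on flow lines is needed, so that every $G$-orbit has $\lvert G\rvert$ elements), and then obtains $(\partial^G)^2=0$ purely formally from $\pi\circ\partial=\partial^G\circ\pi$ and surjectivity of $\pi$. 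This descent argument is cleaner in that it avoids redoing any Floer-theoretic compactness downstairs, and it makes transparent why the later identification $CRF_*^G\cong CRF_*\otimes_G\mathbb{Z}_2$ used in the proof of the Tate-homology theorem holds at the level of differentials; the paper's quotient-manifold argument buys slightly more in return, namely an intrinsic description of the downstairs moduli spaces that one would want anyway if one wished to treat the equivariant theory on its own footing rather than as a shadow of the non-equivariant one. Your remarks on freeness of the action on unparametrized flow lines (via the asymptotics) and on choosing the auxiliary Morse function and perturbation data $G$-invariantly by lifting from the quotient are both sound and, if anything, more careful than the paper on these points.
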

\begin{proof}
First note that $G$ acts free on the moduli space since if there is a $g\in G$ such that $(g\acts u)=u$, then $g\acts u(s,t)=u(s,t)$ for all $s,t\in[0,1]$ and because $G$ acts free on $\Sigma\times\mathbb{R}$, this implies $g=1_G$. From the standard construction of the RF-homology we know that the set of unparametrized gradient flow lines define a manifold $\mathcal{M}(x,y)$ with dimension $\mu(x)=\mu(y)-1$. The corresponding $G$-invariant set is then 
\begin{align}
\mathcal{M}_G([x],[y]):=\left(\bigcup\limits_{\widetilde{x}\in[x],\ \widetilde{y}\in[y]} \mathcal{M}(\widetilde{x}, \widetilde{y})\right)/G
\end{align}
Since $G$ is only a finite group, $\bigcup\limits_{\widetilde{x}\in[x],\ \widetilde{y}\in[y]} \mathcal{M}(\widetilde{x}, \widetilde{y})$ is a finite disjoint union of manifolds and hence again a manifold of same dimension. Further $G$ acts also proper, since it is finite. Now we know that a manifold quotient by a free and proper action is again a manifold with $\text{dim}\mathcal{M}(\widetilde{x},\widetilde{y})-\text{dim} G$ and  by assumption $G$ has dimension zero.
From this we see that $\text{dim}\mathcal{M}(\widetilde{x},\widetilde{y})=\mu([x])-\mu([y])-1$ and the usual argument for $\del^2=0$ and the well-definiteness of the map also works in the $G$-equivariant setting.
\end{proof}

It is not hard to see that one can also define the perturbed Rabinowitz Floer homology from Section~\ref{leaf} in the equivariant setting by simply choosing an invariant perturbation $F$. The arguments, which show that the homology is independent of the chosen manifold and Hamiltonian up to homotopy, still work the same as in the normal case since the $G$-invariant smooth functions are also path connected. 

There is now a very interesting connection between the $G$-equivariant RFH and the Tate homology of $G$. But before we can investigate this further, we first need to repeat some important things connected to Tate homology. 
\begin{definition}\footnote{See \cite{brown1982a}[§VI.3], note that we use $\mathbb{Z}_2$ instead of $\mathbb{Z}$.}
A complete resolution (for a finite group $G$) is an acyclic complex $F = (F_i)_{i\in\mathbb{Z}}$ of projective $\mathbb{Z}_2[G]$-modules,
together with a map $\epsilon: F_0 \to \mathbb{Z}_2$ such that $F_+ \stackrel{\epsilon}{\longrightarrow} \mathbb{Z}_2\longrightarrow 0$ is a resolution in the usual sense, where $F_+ = (F_i)_{i\ge 0}$.
\end{definition} 
Remember that given a ring $R$ and an $R$-module $M$ a resolution of $M$ is an exact sequence of $R$-modules 
\begin{align}
\cdots \longrightarrow F_2\stackrel{\del_2}{\longrightarrow}F_1\stackrel{\del_1}{\longrightarrow}F_0\stackrel{\epsilon}{\longrightarrow}M\longrightarrow 0.
\end{align}
The following property of complete resolutions is crucial for the well-definedness of Tate homology:
\begin{proposition}
If $\epsilon: F \to \mathbb{Z}_2$ and $\epsilon^\prime: F^\prime \to \mathbb{Z}_2$ are complete resolutions, then there
exists a unique homotopy class of augmentation-preserving maps from $F$ to $F^\prime$.These maps are homotopy equivalences.
\label{wellTate}
\end{proposition}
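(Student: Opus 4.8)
The plan is to reduce everything to the classical comparison theorem (the fundamental lemma of homological algebra), applied separately to the positive and negative halves of a complete resolution, and then to glue the resulting maps and homotopies across the junction at $\mathbb{Z}_2$. First I would unpack the structure of a complete resolution. Since $F$ is acyclic and $F_+ = (F_i)_{i\ge 0}$ augmented by $\epsilon$ is an honest projective resolution of $\mathbb{Z}_2$, we have $\operatorname{im}(\partial_1) = \ker\epsilon = \ker(\partial_0)$, so the boundary map $\partial_0\colon F_0 \to F_{-1}$ factors as $\partial_0 = \iota\circ\epsilon$ for a unique injection $\iota\colon \mathbb{Z}_2 \hookrightarrow F_{-1}$. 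Consequently the negative half $0 \to \mathbb{Z}_2 \xrightarrow{\iota} F_{-1} \to F_{-2} \to \cdots$ is a coresolution of $\mathbb{Z}_2$ by projective modules, and the same holds for $F'$ with maps $\epsilon',\iota'$. The crucial structural fact I would invoke here is that for a finite group $G$ the group algebra $\mathbb{Z}_2[G]$ is self-injective (it is a Frobenius algebra), so that every projective $\mathbb{Z}_2[G]$-module is also injective; this is precisely what lets the negative half be handled as an injective resolution.

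For existence, I would construct the chain map $\varphi\colon F\to F'$ in two pieces. On the positive part the comparison theorem lifts $\mathrm{id}_{\mathbb{Z}_2}$ to an augmentation-preserving chain map $\varphi_+\colon F_+ \to F'_+$. On the negative part the dual comparison theorem, valid because the $F'_i$ with $i<0$ are injective, lifts $\mathrm{id}_{\mathbb{Z}_2}$ to a map $\varphi_-\colon F_-\to F'_-$ covering the identity through $\iota,\iota'$. The two pieces glue into a single chain map because both cover $\mathrm{id}_{\mathbb{Z}_2}$: the junction identity $\varphi_{-1}\iota\epsilon = \iota'\epsilon'\varphi_0$ follows from $\epsilon'\varphi_0 = \epsilon$ together with $\varphi_{-1}\iota = \iota'$.

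For uniqueness up to homotopy, I would take two augmentation-preserving chain maps and apply the uniqueness clauses of the comparison theorem (and of its injective dual) to produce chain homotopies on the positive and negative halves separately. The homotopy-equivalence claim is then formal: existence furnishes maps $\varphi\colon F\to F'$ and $\psi\colon F'\to F$, and uniqueness forces $\psi\varphi \simeq \mathrm{id}_F$ and $\varphi\psi \simeq \mathrm{id}_{F'}$, since in each case both sides are augmentation-preserving; hence $\varphi$ is a homotopy equivalence.

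The main obstacle I anticipate lies in the gluing step for uniqueness. A chain homotopy is built degree by degree, and the homotopies supplied by the positive and negative comparison arguments need not automatically agree at degrees $0$ and $-1$. I expect to have to modify one of them by a correction term near the junction, exploiting projectivity in nonnegative degrees and injectivity in negative degrees to solve the relevant lifting and extension problems, so that the spliced homotopy is a genuine chain homotopy across the whole $\mathbb{Z}$-graded complex. This is the one place where the bi-infinite nature of the complex, as opposed to a one-sided resolution, genuinely enters the argument.
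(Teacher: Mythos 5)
Your argument is correct, but note that the paper does not actually prove this proposition: it defers entirely to \cite{brown1982a} (Proposition 3.3), so the relevant comparison is with Brown's argument. There the extension into negative degrees is handled by dualizing the negative half with $\operatorname{Hom}_{\mathbb{Z}[G]}(-,\mathbb{Z}[G])$, turning the coresolution $0\to\mathbb{Z}\to F_{-1}\to F_{-2}\to\cdots$ into an ordinary projective resolution of $\mathbb{Z}^*\cong\mathbb{Z}$ to which the usual comparison theorem applies; this needs the $F_i$ to be finitely generated projectives (or an analogous finiteness/coincidence condition). Your route replaces that duality step by the observation that $\mathbb{Z}_2[G]$ is a quasi-Frobenius (indeed Frobenius) algebra, so projective equals injective and the negative half is literally an injective coresolution, to which the dual comparison theorem applies verbatim. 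This is a genuine gain in the present context: the paper's definition of a complete resolution omits finite generation, and over $\mathbb{Z}$ the statement would be delicate with that definition, whereas over $\mathbb{Z}_2$ your argument shows it holds exactly as stated. Two small points to tighten. First, in the uniqueness step you should record the one-line check that every augmentation-preserving chain map automatically covers $\operatorname{id}_{\mathbb{Z}_2}$ on the negative side: from $\varphi_{-1}\iota\epsilon=\iota'\epsilon'\varphi_0=\iota'\epsilon$ and surjectivity of $\epsilon$ one gets $\varphi_{-1}\iota=\iota'$, so the dual comparison theorem is applicable to both maps being compared. Second, the junction difficulty you anticipate for the homotopies does not in fact arise: the projective comparison produces $H_n$ only for $n\ge 0$ with $\varphi_0-\psi_0=\partial'_1H_0$, the injective comparison produces $H_n$ only for $n\le -2$ with $\varphi_{-1}-\psi_{-1}=H_{-2}\partial_{-1}$, so the index ranges are disjoint and setting $H_{-1}:=0$ already splices them into a chain homotopy on all of $F$ with no correction term needed.
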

See \cite{brown1982a}[Proposition~3.3] for the proof. 
\begin{definition}\footnote{We took the definition from \cite{brown1982a}[§VI.4], but it goes back to \cite{tate1952a}.}
Let $F = (F_i)_{i\in\mathbb{Z}}$ be a complete resolution for the finite group $G$. The Tate homology of $G$ with coefficients in a $G$-module $M$ is defined by
\begin{align}
TH_*(G,M):= H_*(F\tensor_G M).
\end{align}
\end{definition}
Note that proposition~\ref{wellTate} tells us that the Tate homology is unique up to isomorphism and therefore well-defined. This also means that if we want to compute the Tate homology of a group $G$ we can choose any acyclic complex to do so, as long as the complex is a complete resolution. This fact is the main idea behind the connection between $G$-equivariant RF-homology and Tate homology:
\begin{theorem}
\label{tate}
Let $G$ be a finite group and a symmetry of the Hamiltonian system $(M,\omega,H)$ with Lagrangian $L$, which acts free. Assume that $L\cap H^{-1}(0)$ is a connected submanifold of dimension at least 1 and that the system $(M,\omega,H,L)$ fulfils all the requirements needed for Lagrangian Rabinowitz Floer homology. Further, let the system be displaceable and the Conley Zehnder index $\mu_{CZ}$ for the non-constant chords of $(M,\omega,H)$ fulfil
\begin{align}
\lvert\mu_{CZ}(x)\rvert>\text{dim} \left(L\cap H^{-1}(0)\right).
\label{con1}
\end{align}
Then the $G$-equivariant Lagrangian RF-homology is equal to the Tate homology (with $\mathbb{Z}_2$ coefficients) of $G$, i.e.
\begin{align}
RFH_{*}^{G}(M,H,L)=TH_*(G,\mathbb{Z}_2).
\end{align}
\end{theorem}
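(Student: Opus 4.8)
The plan is to exhibit the non-equivariant Lagrangian Rabinowitz Floer chain complex as a complete resolution of the trivial module $\mathbb{Z}_2$ over $\mathbb{Z}_2[G]$, and then to invoke the uniqueness statement of Proposition~\ref{wellTate}. Write $C_* = CRF_*(M,L,H)$ for the chain complex underlying $RFH(M,L,H)$, built from $G$-invariant auxiliary data (the Morse function on the Morse--Bott components and the $G$-equivariant almost complex structure $J_t$). Since all these choices are taken invariantly and the differential counts $G$-equivariant configurations, $C_*$ is a $\mathbb{Z}$-graded chain complex of $\mathbb{Z}_2[G]$-modules, and the $G$-equivariant complex from~(\ref{Gdiff}) is exactly the complex of coinvariants $C_* \otimes_{\mathbb{Z}_2[G]} \mathbb{Z}_2$. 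My first step is to record two structural facts: because $G$ acts freely on $\Sigma \times \mathbb{R}$ it permutes the generators freely, so each $C_i$ is a free---in particular projective---$\mathbb{Z}_2[G]$-module; and because the system is displaceable, the argument recalled in Section~\ref{ExI} (following \cite[chapter 2.4]{merry2014a}) gives $RFH(M,L,H) = 0$, i.e. $C_*$ is acyclic.

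With acyclicity and projectivity in hand, the only remaining thing to check for a complete resolution is the augmentation: I must produce a surjection $\epsilon\colon C_0 \to \mathbb{Z}_2$ onto the trivial module so that the non-negative part $C_{\geq 0} \xrightarrow{\epsilon} \mathbb{Z}_2 \to 0$ is an ordinary resolution. This is where the two remaining hypotheses enter. The constant chords (the critical points with vanishing Lagrange multiplier) form the Morse--Bott manifold $\mathcal{C} := L \cap H^{-1}(0)$, assumed connected of dimension $d := \dim\mathcal{C} \geq 1$; after perturbing by a $G$-invariant Morse function they contribute generators confined to a band of degrees of width $d$. The index hypothesis $|\mu_{CZ}(x)| > d$ for every non-constant chord $x$ guarantees that no non-constant generator lands inside this band, so in this range of degrees $C_*$ agrees with the Morse complex of $\mathcal{C}$. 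Connectedness of $\mathcal{C}$ then yields $H_0(\mathcal{C};\mathbb{Z}_2) = \mathbb{Z}_2$, on which $G$ necessarily acts trivially, and the Morse augmentation of $\mathcal{C}$ furnishes the required $\epsilon$ landing in the trivial module. I would then use the index gap together with acyclicity of the full complex to check that the non-constant chords of positive index extend the truncated Morse complex to a genuine projective resolution of $\mathbb{Z}_2$---they precisely kill the higher Morse homology of $\mathcal{C}$---while the chords of negative index supply the co-resolution below the augmentation; this is what upgrades $C_*$ from a merely acyclic projective complex to a complete resolution.

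Once $C_*$ is identified as a complete resolution, the conclusion is formal. Proposition~\ref{wellTate} gives an augmentation-preserving homotopy equivalence between $C_*$ and any fixed complete resolution $F$ of $\mathbb{Z}_2$, and applying the additive functor $-\otimes_{\mathbb{Z}_2[G]}\mathbb{Z}_2$ preserves this homotopy equivalence. Hence
\begin{align*}
RFH_*^G(M,H,L) = H_*\!\left(C_* \otimes_{\mathbb{Z}_2[G]} \mathbb{Z}_2\right) \cong H_*\!\left(F \otimes_{\mathbb{Z}_2[G]} \mathbb{Z}_2\right) = TH_*(G,\mathbb{Z}_2),
\end{align*}
which is the claimed identity, using that the quotient defining the $G$-equivariant complex coincides with the coinvariants over $\mathbb{Z}_2$.

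I expect the main obstacle to be precisely the augmentation step of the second paragraph: verifying that $C_*$ is a complete resolution and not just an acyclic projective complex. Concretely, the delicate points are (i) achieving $G$-equivariant transversality---comparatively mild here since the action is free, but still requiring invariant auxiliary data so that $C_*$ computes the ordinary $RFH$ while carrying the free $\mathbb{Z}_2[G]$-action---and (ii) using the index gap quantitatively to prove that the positive part, including the interaction between the top of the constant band and the lowest non-constant chords, is exact in positive degrees and has exactly $\mathbb{Z}_2$ at the augmentation. The numerical hypotheses---connectedness, $d \geq 1$, and $|\mu_{CZ}| > d$---are tailored to make this local-to-global comparison with the Morse complex of $\mathcal{C}$ work, and pinning down the grading conventions so that the band and the gap align correctly is the part I would treat most carefully.
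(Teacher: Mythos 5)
Your proposal is correct and follows essentially the same route as the paper's own proof: exhibit $CRF_*(M,L,H)$ as a complete resolution of $\mathbb{Z}_2$ over $\mathbb{Z}_2[G]$ (free modules from the free action, acyclicity from displaceability, the augmentation from the degree-$0$/$1$ part agreeing with the Morse complex of the connected manifold $L\cap H^{-1}(0)$ via the index gap), and then identify the $G$-equivariant complex with the coinvariants $CRF_*\otimes_{\mathbb{Z}_2[G]}\mathbb{Z}_2$. The only cosmetic differences are that you construct $\epsilon$ as the Morse augmentation rather than via the codimension-one image of $\del_1$, and that you route the final identification explicitly through Proposition~\ref{wellTate}; both amount to the same argument.
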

\begin{remark}
The statement is also true for equivariant Rabinowitz Floer homology for loops instead of chords between a Lagrangian, if one changes the condition (\ref{con1}) to $\lvert\mu_{CZ}\rvert>\text{dim}H^{-1}(0)$. The proof then also works exactly the same. The reason why we chose to state the theorem like this is, because it fits better for the application to consecutive collision orbits we are discussing in this paper.
\end{remark}
\begin{proof}[Proof of Theorem~\ref{tate}]
The idea of this proof is to show that the Lagrangian Rabinowitz Floer complex for displaceable systems is a complete resolution for $G$. The displaceability guaranties that our Floer complex is acyclic and by definition consists of $\mathbb{Z}_2$ vector spaces. Since $G$ acts by assumption on the generators of the complex, we can view each space in the complex as free $\mathbb{Z}_2[G]$ module. We know that $L\cap H^{-1}(0)$ is at least one dimensional and by condition (\ref{con1}) this means that 
\begin{align*}
CRF_1(M,H,L)\overset{\del_1}{\longrightarrow}CRF_0(M,H,L)
\end{align*}
is equal to the corresponding part of the Morse complex 
\begin{align*}
CM_1(L\cap H^{-1}(0),f)\overset{\del_1}{\longrightarrow}CM_0(L\cap H^{-1}(0),f),
\end{align*}
where $f$ is a $G$-invariant Morse function on $L\cap H^{-1}(0)$. Note that here we assume the grading to be $\mu=\mu_{CZ} +\mu_{Morse}$. If one would like to use the signature index instead of the Morse index, one would need to include an index shift in the statement of the theorem.  Our goal is now to find an augmentation $\epsilon: CRF_0(M,H,L)=CM_0(L\cap H^{-1}(0),f)\to \mathbb{Z}_2$, such that $\left(CRF_i(M,H,L)\right)_{i\ge 0} \stackrel{\epsilon}{\longrightarrow} \mathbb{Z}_2\longrightarrow 0$ is a resolution of $\mathbb{Z}_2$. It is well known that for every connected manifold the zeroth homology is one dimensional as long as it is not empty. This means that the image of $\del_1$ is a codimension one subspace in side $CM_0(L\cap H^{-1}(0),f)$ and therefore we can define $\epsilon$ as the linear map that maps all elements in the image of $\del_1$ to zero and the basis vector in the complement to one. With this definition it is clear that $\text{im}(\del_1)=\ker(\epsilon)$ and that $\epsilon$ is surjective, i.e. 
\begin{align*}
\ldots \overset{\del_2}{\longrightarrow}CRF_1(M,H,L)\overset{\del_1}{\longrightarrow}CRF_0(M,H,L)\overset{\epsilon}{\longrightarrow}\mathbb{Z}_2\longrightarrow 0
\end{align*}
is a resolution of $\mathbb{Z}_2$ and $\left(CRF_i(M,H,L)\right)_{i\in\mathbb{Z}}$ is a complete resolution. The corresponding Tate homology is then $TH_*(G,\mathbb{Z}_2):= H_*(CRF_*(M,H,L)\tensor_G \mathbb{Z}_2)$, where we consider $\mathbb{Z}_2$ as a trivial $G$-module. Tensoring over $G$ to the $\mathbb{Z}_2[G]$ modules $CRF_i(M,H,L)$ the trivial $G$-module $\mathbb{Z}_2$ corresponds to taking the quotient of the $CRF_i(M,H,L)$'s with respect to the $G$ action. The new boundary operators between the modules $CRF_i(M,H,L)\tensor_G \mathbb{Z}_2$ are just the old boundary operators applied to the first part of the tensor product, i.e. $\widetilde{\del}(x\tensor_G 1):= (\del x)\tensor_G 1$. A quick calculation shows that these new boundary operators are exactly the same as the ones we defined for the $G$-equivariant RF-homology. Hence the two complexes $(CRF_*(M,H,L)\tensor_G \mathbb{Z}_2,\widetilde{\del})$ and $(CRF^G_*(M,H,L),\del_G)$ are the same and therefore also their respective homologies. This concludes the proof of the theorem.
\end{proof}

\section{Existence of Symmetric Consecutive Collision Orbits II}
\label{ExII}
After the preparations in the previous section we are now able to prove the existence statement for symmetric consecutive collision orbits in its full strength:
\begin{theorem}
In the setting of the planar circular restricted three body problem there are
\begin{enumerate}
	\item[$\bullet$] infinitely many symmetric consecutive collision orbits or at least one periodic symmetric consecutive collision orbit all intersecting their symmetry axis on the straight line between the second and the main body and
	\item[$\bullet$] infinitely many symmetric consecutive collision orbits or at least one periodic symmetric consecutive collision orbit all intersecting their symmetry axis on the extension of this line to the opposite side of the main body
\end{enumerate}
for all energies below the first critical energy value. 
\label{maxEx}
\end{theorem}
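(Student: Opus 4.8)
The plan is to rerun the computation of Chapter~\ref{ExI}, but now for the $\mathbb{Z}_2$-equivariant Lagrangian Rabinowitz Floer homology, and to feed its outcome into Theorem~\ref{tate}. I take $G=\mathbb{Z}_2$ acting by the deck transformation $z\mapsto -z,\ w\mapsto -w$ of the Levi-Civita double cover. This is a symmetry of the regularized Hamiltonian $H$, it leaves $L_{\text{col}}=\{z=0\}$ invariant, and both $\widehat{F}_S$ and $\widehat{F}_M$ were deliberately arranged to be invariant under it. Treating the solar eclipse case first, I use the perturbation $\widehat{F}_S$: by the bijection established in Chapter~\ref{ExI} the critical points of $\mathcal{A}^{\widehat{F}_S^t}_H$ are precisely the symmetric consecutive collision orbits meeting the symmetry axis in a solar eclipse point, and since this correspondence is built from the $G$-invariant Hamiltonians $H$ and $\widehat{F}_S$ it is $G$-equivariant. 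Hence the generators of $RFH_*^G(M,L_{\text{col}},H,\widehat{F}_S)$ are exactly the $G$-classes of such orbits.

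Next I verify the hypotheses of Theorem~\ref{tate} for $(M,\omega,H,L_{\text{col}})$ with this $G$, the requirements for Lagrangian Rabinowitz Floer homology itself having been checked in Chapter~\ref{3BP}. Freeness is immediate: the only fixed point of $z\mapsto -z,\ w\mapsto -w$ is $z=w=0$, and evaluating the regularized Hamiltonian~(\ref{3Ham}) at $z=0$ gives $\frac{1}{8}\|w\|^2-(1-\mu)$, so $H^{-1}(0)$ forces $\|w\|^2=8(1-\mu)\neq 0$ and the action is free on $H^{-1}(0)$ and on its symplectization. The same computation shows $L_{\text{col}}\cap H^{-1}(0)=\{(0,w):\|w\|^2=8(1-\mu)\}\cong S^1$, which is connected of dimension one, so the connectivity hypothesis holds and condition~(\ref{con1}) reduces to $\lvert\mu_{CZ}(x)\rvert>1$ for every non-constant chord. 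Displaceability is the same input already used in Chapter~\ref{ExI} to obtain $RFH(M,L_{\text{col}},H)=0$. Granting the index bound, Theorem~\ref{tate} together with the $G$-equivariant version of the perturbation invariance from Section~\ref{Eq-RFH} gives
\begin{align*}
RFH_*^G(M,L_{\text{col}},H,\widehat{F}_S)\cong RFH_*^G(M,L_{\text{col}},H)=TH_*(\mathbb{Z}_2,\mathbb{Z}_2).
\end{align*}

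It remains to extract the geometry. Using the $2$-periodic complete resolution of $\mathbb{Z}_2$ whose differential is the norm $1+t$, and observing that $1+t$ induces the zero map after tensoring with the trivial module $\mathbb{Z}_2$, one computes $TH_n(\mathbb{Z}_2,\mathbb{Z}_2)=\mathbb{Z}_2$ for every $n\in\mathbb{Z}$. Thus the perturbed complex must carry generators in arbitrarily high degree. The constant critical points lie on the circle $L_{\text{col}}\cap H^{-1}(0)$ and occupy only a bounded range of degrees, so there are non-constant symmetric consecutive collision orbits of unboundedly large Conley--Zehnder index. A single geometrically distinct orbit contributes to only finitely many degrees; therefore either there are infinitely many such orbits, or there are finitely many and at least one of them closes up into a periodic orbit whose iterates supply the generators of arbitrarily high index. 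This is exactly the claimed dichotomy for the solar eclipse point, and the identical argument with $\widehat{F}_M$ and $L_M$ in place of $\widehat{F}_S$ and $L_S$ yields the lunar eclipse statement.

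The step I expect to be the main obstacle is the verification of the index bound $\lvert\mu_{CZ}(x)\rvert>1$ for non-constant chords, the only hypothesis of Theorem~\ref{tate} that depends on the dynamics of the three body problem rather than on formal structure. I would establish it from the observation that below the first critical value the Levi-Civita regularized flow is, near the collision locus, a controlled perturbation of the harmonic oscillator coming from the regularized Kepler problem, whose chords between $L_{\text{col}}$ and $L_S$ (respectively $L_M$) have explicitly computable Conley--Zehnder indices. The genuine difficulty is then to propagate this local estimate to all non-constant chords on the compact energy hypersurface.
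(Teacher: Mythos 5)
Your overall architecture is right --- identify the orbits with critical points of the perturbed equivariant Rabinowitz action functional, compute the equivariant homology as $TH_*(\mathbb{Z}_2,\mathbb{Z}_2)=\mathbb{Z}_2$ in every degree, and extract the ``infinitely many or one periodic'' dichotomy from the unbounded range of degrees. But there is a genuine gap at the step you yourself flag: you apply Theorem~\ref{tate} \emph{directly} to the regularized three body problem $(M,\omega,H,L_{\text{col}})$, which requires verifying the index bound $\lvert\mu_{CZ}(x)\rvert>\dim\left(L_{\text{col}}\cap H^{-1}(0)\right)=1$ for \emph{every} non-constant chord of the actual three body dynamics below the first critical value. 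You do not prove this, and your proposed route (a local harmonic-oscillator model near the collision locus, then ``propagating'' the estimate over the whole compact energy hypersurface) is not an argument --- there is no reason the global chords of the rotating-frame Hamiltonian, which can wind arbitrarily through the Hill region, should obey a uniform index bound, and no mechanism is offered for the propagation. As stated, the hypothesis of Theorem~\ref{tate} is simply unverified for the system you apply it to.

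The paper avoids this entirely, and this is the one idea your proposal is missing: Theorem~\ref{tate} is never applied to the three body problem itself. Instead one introduces the model system $\widetilde{H}(z)=\|z\|^2-1$ on $\mathbb{C}^2$ with Lagrangian $L=\mathbb{R}^2$, where the chords are explicit rotations and the Conley--Zehnder index of the chord of period $\tfrac{m}{2}\pi$ is computed to be $-2m$, so that $\lvert\mu_{CZ}\rvert\ge 2>1$ holds by direct calculation. Theorem~\ref{tate} is applied to \emph{that} system to get $TH_*(\mathbb{Z}_2,\mathbb{Z}_2)$, and the answer is then transported to $RFH_*^{\mathbb{Z}_2}(\mathbb{C}^2,H,L_{\text{col}},\widehat{F}^t_{S})$ by the invariance of ($\mathbb{Z}_2$-equivariant, perturbed) Rabinowitz Floer homology under homotopies of the Hamiltonian and of the $\mathbb{Z}_2$-invariant perturbation --- exactly the mechanism already used in chapter~\ref{ExI} for the non-equivariant count. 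Since the homology is a homotopy invariant, the index hypothesis only ever needs to be checked on the computable model, not on the three body problem. If you reorganize your argument this way, the rest of your proposal (freeness of the $\mathbb{Z}_2$ action on $H^{-1}(0)$ via $\|w\|^2=8(1-\mu)\neq 0$, the identification $L_{\text{col}}\cap H^{-1}(0)\cong S^1$, the periodicity of the Tate complex, and the final dichotomy) goes through; you would still want to add the paper's Morse--Bott genericity step (perturbing $H$ to a sequence $H_n$ and using Arzel\`a--Ascoli) to make the invariance statement and the passage to the limit rigorous.
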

\begin{remark}
If we would consider as main body the earth, as second body the sun and as small satellite the moon, then for the first type of orbits the point where it intersects the symmetry axis corresponds to a solar eclipse and for the second type of orbits this point corresponds to a lunar eclipse. Mathematically the first type will correspond to the chords ending in the Lagrangian $L_S$ and the second to chords ending in $L_M$.
\end{remark}
\begin{proof}[Proof of Theorem~\ref{maxEx}]
The Idea to prove this statement is that we first define an easier but still homotopic Hamiltonian system and then use Theorem~\ref{tate} to calculate the $\mathbb{Z}_2$-equivariant Floer theory. The dimension of this homology will give us a lower bound for the number of orbits we are looking for. For this homotopic system we choose as Hamiltonian
\begin{align*}
\widetilde{H}: \mathbb{C}^2\to \mathbb{R},\ \ z\mapsto \left(\sum\limits_{i=1}^2\lvert z_i\rvert^2\right)-1
\end{align*}
on $(\mathbb{C}^2,\omega)$ with the standard symplectic form
\begin{align*}
\omega =\sum\limits_{i=1}^2\D x_i\wedge\D y_i, 
\end{align*}
note that on $\mathbb{C}^2$ we use the notation $z=x+iy$ where $x=\binom{x_1}{x_2}\in\mathbb{R}^2$ and $y=\binom{y_1}{y_2}\in\mathbb{R}^2$. Note that we can consider $(\mathbb{C}^2,\omega)$ to be the completion of a Liouville domain with the boundary being $\widetilde{H}^{-1}(0)$, since $\widetilde{H}^{-1}(0)$ is just $S^3$. But for a well-defined RF-homology we actually need a Hamiltonian function that is constant outside of a compact set. So take a smooth cut off function $\chi_{[0, 2]}\in C^\infty(\mathbb{R},\mathbb{R})$, which has support in $\left[0, 2\right]$ and is constant $1$ in $\left[0, 2-\epsilon\right]$. Then redefine the Hamiltonian as 
\begin{align*}
 \widetilde{H}: \mathbb{C}^2\to \mathbb{R},\ \ z\mapsto \left[\left(\sum\limits_{i=1}^2\rvert z_i\rvert^2\right)-1\right]\cdot \chi_{[0, 2]}(\|z\|)+ c_0\cdot\left(1-\chi_{[0, 2]}(\|z\|)\right),
\end{align*}
where $c_0\in\mathbb{R}$ is non zero. Note that around our energy hypersurface $\widetilde{H}^{-1}(0)$ the Hamiltonian function is still the same as before. The Hamiltonian vector field in this neighbourhood is 
then in real notation
\begin{align*}
X_H(x,y)= 2\binom{-y}{x}
\end{align*}
From this we easily see that on $\widetilde{H}^{-1}(0)=S^3$ we have
\begin{align*} 
\lambda(X_H)=\frac{1}{2}\sum\limits_{i=1}^{2}(2x_i^2+2y_i^2)=1,
\end{align*}
i.e. the Hamiltonian vector field coincides with the Reeb vector field. Now choose as Lagrangian subspace $L=\mathbb{R}^2\subset\mathbb{C}^2$, which fulfils $\lambda\big\vert_L=0$. So we have now all the necessary ingredients for the Lagrangian Rabinowitz Floer homology. Now we calculate the Conley-Zehnder index:\\
For a given $z\in S^3\cap L$ the corresponding chord is then simply
\begin{align*}
\phi(z,t)=
\begin{pmatrix}
	\cos(2t)\mathbb{1}_2 & \sin(2t)\mathbb{1}_2\\
	-\sin(2t)\mathbb{1}_2 & \cos(2t)\mathbb{1}_2
\end{pmatrix}
\begin{pmatrix}
	x\\y
\end{pmatrix}
\ \ \text{ for } t\in\left[0,\frac{m}{2}\pi\right],
\end{align*}
where for now $m\in\mathbb{N}$. To compute the Lagrangian CZ-index for those chords we use our Lagrangian $L$ as base point and the following path of Lagrangian subspaces:
\begin{align*}
\Gamma(t):= \Omega(t)L\ \ \text{ with }\ \ 
\Omega(t)=
\begin{pmatrix}
	\cos(2t)\mathbb{1}_2 & \sin(2t)\mathbb{1}_2\\
	-\sin(2t)\mathbb{1}_2 & \cos(2t)\mathbb{1}_2
\end{pmatrix}
\end{align*}
For the quadratic form take as Lagrangian subspaces $\mathbb{R}^2,\ i\mathbb{R}^2\subset\mathbb{C}^2$. To calculate this form we first need to find for every $\binom{x}{0}$ in $\mathbb{R}^2$ a $\binom{0}{v}$ in $i\mathbb{R}^2$ such that 
\begin{align*}
\binom{x}{0}+\binom{0}{v}=\binom{\cos(2t)y}{-\sin(2t) y},
\end{align*}
for an arbitrary $y\in\mathbb{R}^2$. This is fulfilled by $v=-\tan(2t)x$. Hence the quadratic form is around the intersection points $t=\frac{m}{2}\pi$ of $\Gamma(t)$ with $L$
\begin{align*}
Q_{\Gamma(t)}\left(\binom{x}{0}\right)&= \omega\left(\binom{x}{0},\binom{0}{-\tan(2t)x}\right)\\
&=\langle x,-\tan(2t)x\rangle.
\end{align*}
Hence we have for the crossing form
\begin{align*}
\frac{\D}{\D t}\bigg\vert_{t=\frac{m}{2}\pi} Q_{\Gamma(t)}\left(\binom{x}{0}\right)=\langle x,\frac{-1}{\cos^2(2t)}\mathbb{1}_2 x\rangle
\end{align*}
and its signum is $-2$. Therefore $\text{sign}C(\Gamma,L,t)$ for $t$ being a intersection point is $-2$ and for negative periods the minus sign becomes a plus sign.
Every chord is uniquely defined by its starting point on $S^3\cap L=S^{1}$ and its period $\tau$, hence these critical points form infinitely many copies of $S^{1}$ index by their shared period.  
So let now $x$ be a chord with period $\tau=\frac{m}{2}\pi$, then its Conley Zehnder index is given by
\begin{align*}
\mu_{CZ}(x)&=-\frac{2}{2}-2(m-1)-\frac{2}{2}\\
&= -2m.
\end{align*}

Now we want to introduce our symmetry, which is just simply multiplying by $-1$. Since the Hamiltonian only sees the norm of the points and the symplectic form is a linear two form, this is in fact a  symmetry of the Hamiltonian system and $J$ is clearly $\mathbb{Z}_2$-equivariant. Further we can interpret this as an action of the Lie group $\mathbb{Z}_2$, which is obviously free and proper. By the previous section this means that we have a well-defined $\mathbb{Z}_2$-equivariant RF-homology. 

To compute this homology we use Theorem~\ref{tate}: We already know that $\mathbb{Z}_2$ is a symmetry of the Hamiltonian system that acts free and that the Hamiltonian system itself is displaceable. Further, $\mathbb{R}^2\cap H^{-1}(0)=S^{1}$ is a one dimensional connected submanifold and the above considerations show now that the Conley-Zehnder index for non-constant chords fulfils the condition
\begin{align*}
\lvert\mu_{CZ}(x)\rvert\ge 2> \text{dim}\left(\mathbb{R}^2\cap H^{-1}(0)\right).
\end{align*}
Hence Theorem~\ref{tate} tells us that 
\begin{align*}
RFH^{\mathbb{Z}_2}_*(\mathbb{C}^2,\widetilde{H},L=\mathbb{R}^2)= TH_*(\mathbb{Z}_2,\mathbb{Z}_2). 
\end{align*}
Note that by \cite{brown1982a}[§VI.4] the Tate homology of $\mathbb{Z}_2$ (with $\mathbb{Z}_2$ coefficients) is just the group homology in positive degrees and the group cohomology in the negative degrees and since the classifying space of $\mathbb{Z}_2$ is $\mathbb{RP}^\infty$ the Tate homology is just $\mathbb{Z}_2$ for every index. The invariance property then tells us that 
\begin{align*}
RFH_*^{\mathbb{Z}_2}(\mathbb{C}^2,H,L_{\text{col}})\cong RFH^{\mathbb{Z}_2}_*(\mathbb{C}^2,\widetilde{H},\mathbb{R}^2),
\end{align*}
since the Hamiltonian of the restricted three body problem after Levi-Civita regularization (see equation~(\ref{3Ham})) is invariant under the $\mathbb{Z}_2$ action and so is obviously every Lagrangian vector space. We can further use the invariance to infer that
\begin{align*}
RFH_*^{\mathbb{Z}_2}(\mathbb{C}^2,H,L_{\text{col}})\cong RFH_*^{\mathbb{Z}_2}(\mathbb{C}^2,H,L_{\text{col}},\widehat{F}^t_S),
\end{align*}
because $\widehat{F}^t_S=\frac{\pi}{2}(z_2^2+w_2^2)\chi_{\widetilde{K}}(z^2,w^2)\zeta(t)$ is also $\mathbb{Z}_2$-invariant. It should be clear that $$\dim\left(RFH_*^{\mathbb{Z}_2}(\mathbb{C}^2,H,L_{\text{col}},\widehat{F}^t_S)\right)$$ still gives us a lower bound for the number of critical points of the perturbed Rabinowitz action functional $\mathcal{A}_{\widehat{F}^t_S}^H$ and by what we discussed at the beginning of section~\ref{ExI} all these critical points correspond to symmetric consecutive collision orbits which intersect the symmetry axis at a solar eclipse point. 

Note that the statement that the RFH of $\widetilde{H}$ and $H$ are isomorphic implicitly assumed that $\mathcal{A}_H$ is Morse-Bott, but this does not need to be the case. However, \cite[Theorem~B.1.]{cieliebak2009a} tells us that the Rabinowitz action functional is Morse-Bott for a generic choice of Hamiltonians and therefore we can find a sequence of Hamiltonian functions $H_n$ converging to $H$ such that they are Morse-Bott and fulfil 
\begin{align*}
RFH_*^{\mathbb{Z}_2}(\mathbb{C}^2,H_n,L_{\text{col}})\cong RFH^{\mathbb{Z}_2}_*(\mathbb{C}^2,\widetilde{H},\mathbb{R}^2).
\end{align*}
This again implies that for every Maslov index $\mu$ we find a trajectory $x^\mu_n$ of $H_n$ that starts in $L_{\text{col}}$ and ends in $L_S$. By using spectral numbers one can see that the action $\mathcal{A}_{H_n}\left(x^\mu_n\right)$ is bounded for all $n$ and since the period is equal to the action value the periods in the sequence $\left(x^\mu_n\right)_{n\in\mathbb{N}}$ are bounded. Hence, we can use Arzelà–Ascoli and see that the $x^\mu_n$ (up to taking a subsequence) converge to a trajectory $x_*$ of $H$. The Maslov index of $x_*$ now does not need to coincide with $\mu$, but it can at most change by plus or minus the half dimension of the Maslov pseudo cycle. So by taking the limit $H_n\to H$ the infinitely many chords of $H_n$ can not collapse into only finitely many of $H$. The same arguments of course also works for the other type of orbits using instead $RFH_*^{\mathbb{Z}_2}(\mathbb{C}^2,H,L_{\text{col}},\widehat{F}^t_M)$. This proves the theorem.
\end{proof}

\begin{remark}
It shouldn't be too surprising that using the symmetry of our Hamiltonian leads to a better understanding of the system, since this one of the key concepts in physics. In classical mechanics one usually uses symmetries to simplify the equations of motion, which makes it easier to compute the trajectories in the system. In Rabinowitz Floer homology the symmetry apparently has a slightly different effect. Incorporating the symmetry into our considerations doesn't make it easier to compute the homology, instead the predictive power of the homology increases drastically.
\end{remark}

\newpage
\bibliographystyle{alpha}
\bibliography{Kbib}{}
\end{document}